\providecommand{\U}[1]{\protect\rule{.1in}{.1in}}
\newtheorem{theorem}{Theorem}
\newtheorem{definition}[theorem]{Definition}
\newtheorem{proposition}[theorem]{Proposition}
\newenvironment{proof}[1][Proof]{\noindent\textbf{#1.} }{\ \rule{0.5em}{0.5em}}
\begin{document}

\title{\textbf{Average operators on rectangular Herz spaces\thanks{To appear in Tatra
Mountains Mathematical Publications}}}
\author{Carolina Espinoza-Villalva
\and Martha Guzm\'{a}n-Partida}
\date{}
\maketitle

\begin{abstract}
We introduce a family of Herz type spaces considering rectangles instead of
balls and we study continuity properties of some average operators acting on them.

\bigskip

{\footnotesize Key words: Herz spaces, average operator.}

{\footnotesize 2010 MSC: 42B35, 26D10.}

\end{abstract}

\section{Introduction}

Herz spaces have been studied for many years. The roots of this subject lie on
the pioneering work of N. Wiener \cite{wiener1}, A. Beurling \cite{beurling}
and C. Herz \cite{herz}. Later, these spaces were generalized by other
mathematicians in order to study continuity properties of classical operators
in harmonic analysis, as well as to develop local versions of Hardy spaces and
bounded mean oscillation spaces.

There are several definitions of Herz space. The following is classical and
corresponds to the inhomogeneous setting: a measurable function $f$ belongs to
the Herz space $K_{p,q}^{\alpha}\left(  \mathbb{R}^{n}\right)  $, $1\leq
p,q<\infty$, $\alpha\in\mathbb{R}$ if%
\begin{equation}
\left\Vert f\right\Vert _{K_{p,q}^{\alpha}}:=\left(
{\displaystyle\sum\limits_{k=0}^{\infty}}
2^{nk\alpha q}\left\Vert f\chi_{C_{k}}\right\Vert _{p}^{q}\right)
^{1/q}<\infty\text{,} \label{classic1}%
\end{equation}
and for $q=\infty$%
\begin{equation}
\left\Vert f\right\Vert _{K_{p,\infty}^{\alpha}}:=\sup_{k\geq0}\left(
2^{nk\alpha}\left\Vert f\chi_{C_{k}}\right\Vert _{p}\right)  <\infty\text{.}
\label{classic2}%
\end{equation}
Here $C_{0}$ is the open unit ball $B_{1}\left(  0\right)  $ and
$C_{k}=B_{2^{k}}\left(  0\right)  \setminus B_{2^{k-1}}\left(  0\right)  $,
$k\in\mathbb{N}$.

Setting $\alpha=-1/p$ in (\ref{classic2}) we obtain the space $B^{p}\left(
\mathbb{R}^{n}\right)  $ that also can be characterized by mean of the
condition (\cite{Feichtinger}, \cite{GCuerva})%
\begin{equation}
\sup_{R\geq1}\left(  \frac{1}{\left\vert B_{R}\left(  0\right)  \right\vert }%
{\displaystyle\int\nolimits_{B_{R}\left(  0\right)  }}
\left\vert f\left(  x\right)  \right\vert ^{p}dx\right)  ^{1/p}<\infty
\label{classic3}%
\end{equation}
and the quantity on the left hand side of (\ref{classic3}) defines an
equivalent norm to $\left\Vert f\right\Vert _{K_{p,\infty}^{-1/p}}$ that is
usually denoted by $\left\Vert f\right\Vert _{B^{p}}$. With any of these norms
$B^{p}\left(  \mathbb{R}^{n}\right)  $ turns out to be a Banach space.
Moreover, for $1\leq p_{1}<p_{2}<\infty$ we have the inclusions $B^{p_{2}%
}\left(  \mathbb{R}^{n}\right)  \subset B^{p_{1}}\left(  \mathbb{R}%
^{n}\right)  $ and $L^{\infty}\left(  \mathbb{R}^{n}\right)  \subset
B^{p}\left(  \mathbb{R}^{n}\right)  $ for every $p$.

In this work we will restrict to the context of the space $B^{p}\left(
\mathbb{R}^{n}\right)  $ for $1\leq p<\infty$. Our aim is to explore what
happens when we consider rectangles\ with sides parallel to the coordinate
axes instead of balls in (\ref{classic3}). As we will see below, although we
obtain a smaller space than $B^{p}\left(  \mathbb{R}^{n}\right)  $, it is
still appropriate to study continuity properties of some classical operators.
In the context of the present paper, we study continuity properties of some
discrete and continuous versions of the classical Hardy average operator. This
operator has been extensively studied for many authors on different function
spaces. We restrict ourself to consider the most simple versions of this
operator in order to make easy the reading of the present paper.

The manuscript is organized as follows: the second section is devoted to
introduce the rectangular Herz spaces and to give some examples. In the third
section we introduce the average operators to be considered and we prove the
continuity of these averages on our spaces.

We will employ standard notation along this work and we will also adopt the
convention to denote by $C$ a constant that could be changing line by line.

\section{Rectangular Herz spaces}

For $1\leq p<\infty$, we define the following space%
\[
\mathcal{B}^{p}\left(  \mathbb{R}^{n}\right)  =\left\{  f\in L_{loc}%
^{p}\left(  \mathbb{R}^{n}\right)  :\left\Vert f\right\Vert _{\mathcal{B}^{p}%
}<\infty\right\}  \text{,}%
\]
where
\begin{equation}
\left\Vert f\right\Vert _{\mathcal{B}^{p}}:=\sup_{\substack{R_{j}%
\geq1\\j=1,...,n}}\left[  \frac{1}{R_{1}...R_{n}}%
{\displaystyle\int\nolimits_{\left[  -R_{1},R_{1}\right]  \times
...\times\left[  -R_{n},R_{n}\right]  }}
\left\vert f\left(  x\right)  \right\vert ^{p}dx\right]  ^{1/p}\text{.}
\label{norma1}%
\end{equation}
If the context does not cause confusion, we will simply write $\mathcal{B}%
^{p}$. Notice that for $n=1$, the spaces $\mathcal{B}^{p}\left(
\mathbb{R}\right)  $ and $B^{p}\left(  \mathbb{R}\right)  $ coincide.

Standard arguments (see \cite{A-GP-L}, for example) allow us to see that
$\left(  \mathcal{B}^{p},\left\Vert \cdot\right\Vert _{\mathcal{B}^{p}%
}\right)  $ is a Banach space. Moreover, it is clear that $\mathcal{B}%
^{p}\subset B^{p}$ and $\left\Vert \cdot\right\Vert _{B^{p}}\leq\left\Vert
\cdot\right\Vert _{\mathcal{B}^{p}}$ since Lebesgue measure of balls and cubes
are comparable.

\begin{proposition}
\label{inclusion}The space $\mathcal{B}^{p}\left(  \mathbb{R}^{n}\right)  $ is
properly contained in $B^{p}\left(  \mathbb{R}^{n}\right)  $ when $n\geq2$.
\end{proposition}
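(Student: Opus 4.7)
The plan is to exhibit a single concrete function $f \in B^p(\mathbb{R}^n) \setminus \mathcal{B}^p(\mathbb{R}^n)$. Since we already have the continuous inclusion $\mathcal{B}^p \subseteq B^p$, the proposition will follow. The heuristic that guides the construction is that a rectangular box with sides $R_1,\dots,R_n$ of volume $\sim R_1\cdots R_n$ can be long and narrow, whereas a ball of radius $R$ has volume $\sim R^n$; so a function concentrated in a thin slab around a coordinate axis should average normally over balls but blow up when one uses long thin rectangles aligned with that slab.

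Concretely, the candidate I would test is
\[
f(x_1,x_2,\dots,x_n) := |x_1|^{\alpha}\,\chi_{[-1,1]^{\,n-1}}(x_2,\dots,x_n),
\]
where $\alpha>0$ is chosen in the interval $0<\alpha<(n-1)/p$ (for instance $\alpha=(n-1)/(2p)$). Note that the hypothesis $n\ge 2$ is used precisely to guarantee that this interval is nonempty. The function is clearly locally $L^p$.

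The main technical step is to compute (or at least estimate) the two averages. For the rectangular norm I would specialize to $R_1=R$ large and $R_2=\dots=R_n=1$; a direct computation gives
\[
\frac{1}{R_1\cdots R_n}\int_{[-R_1,R_1]\times\cdots\times[-R_n,R_n]}|f|^p\,dx
=\frac{2^{n-1}}{R}\int_{-R}^{R}|x_1|^{p\alpha}\,dx_1
\sim R^{p\alpha},
\]
and taking the $p$-th root and letting $R\to\infty$ shows $\|f\|_{\mathcal{B}^p}=\infty$. For the ball average I would use $\int_{B_R(0)}|f|^p\,dx\le 2^{n-1}\int_{-R}^{R}|x_1|^{p\alpha}\,dx_1 \sim R^{p\alpha+1}$, so after dividing by $|B_R(0)|\sim R^n$ one obtains a bound $\sim R^{p\alpha+1-n}$, which is bounded in $R\ge 1$ exactly because $p\alpha<n-1$; the small $R$ regime is harmless since $f$ is locally $L^p$. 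Thus $f\in B^p(\mathbb{R}^n)$.

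The only real obstacle is picking the exponent $\alpha$ in the right window: too small and the rectangular average may remain bounded, too large and one loses membership in $B^p$. The interval $0<\alpha<(n-1)/p$ arises naturally from balancing these two constraints, and it is nonempty precisely when $n\ge 2$, which matches the statement of the proposition.
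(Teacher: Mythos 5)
Your proposal is correct. The strategy is the same as the paper's --- exhibit an explicit function concentrated along a coordinate axis, so that averages over long thin rectangles blow up while averages over balls stay bounded --- but the concrete counterexample is genuinely different. The paper works only the case $n=2$ (remarking it is ``for the sake of clarity'') and uses a piecewise-constant function supported on the cross $\left(\left[-1,1\right]\times\mathbb{R}\right)\cup\left(\mathbb{R}\times\left[-1,1\right]\right)$, taking the value $k^{1/p}$ on the $k$-th layer; the verification is then a pair of discrete summations. You instead take the power weight $\left\vert x_{1}\right\vert ^{\alpha}\chi_{\left[-1,1\right]^{n-1}}\left(x_{2},...,x_{n}\right)$ with $0<\alpha<(n-1)/p$, which handles all $n\geq2$ at once with two one-line integrals, and makes transparent exactly where the hypothesis $n\geq2$ enters (the window for $\alpha$ is nonempty precisely then). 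Your exponent bookkeeping checks out: the rectangular average with $R_{1}=R$, $R_{2}=...=R_{n}=1$ is comparable to $R^{p\alpha}\rightarrow\infty$, while the ball average is $O\!\left(R^{p\alpha+1-n}\right)$, bounded on $R\geq1$ since $p\alpha<n-1$ (note only $R\geq1$ matters for the $B^{p}$ norm, so the remark about small $R$ is not even needed). The one thing worth stating explicitly in a final write-up is the containment $\mathcal{B}^{p}\subset B^{p}$ with $\left\Vert \cdot\right\Vert _{B^{p}}\leq\left\Vert \cdot\right\Vert _{\mathcal{B}^{p}}$, which you correctly cite as already established; with that in hand your single example completes the proof.
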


\begin{proof}
For the sake of clarity, we will consider the case $n=2$.

Let $f:\mathbb{R}^{2}\rightarrow\mathbb{R}$ be the function defined as
follows:
\[
f\left(  x\right)  =\left\{
\begin{array}
[c]{ccl}%
0 & \text{if} & x\notin\left(  \left[  -1,1\right]  \times\mathbb{R}\right)
\cup\left(  \mathbb{R}\times\left[  -1,1\right]  \right)  ,\\
1 & \text{if} & x\in\left[  -1,1\right]  \times\left[  -1,1\right]  ,\\
2^{1/p} & \text{if} & x\in\left(  \left[  -1,1\right]  \times\left(
1,2\right]  \right) \\
& \quad & \cup\left(  \left[  -1,1\right]  \times\left[  -2,-1\right)  \right)
\\
& \quad & \cup\left(  \left(  1,2\right]  \times\left[  -1,1\right]  \right)
\\
&  & \cup\left(  \left[  -2,-1\right)  \times\left[  -1,1\right]  \right)  ,\\
& \cdot & \\
& \cdot & \\
& \cdot & \\
n^{1/p} & \text{if} & x\in\left(  \left[  -1,1\right]  \times\left(
n-1,n\right]  \right) \\
& \quad & \cup\left(  \left[  -1,1\right]  \times\left[  -n,-n+1\right)
\right) \\
& \quad & \cup\left(  \left(  n-1,n\right]  \times\left[  -1,1\right]  \right)
\\
& \quad & \cup\left(  \left[  -n,-n+1\right)  \times\left[  -1,1\right]
\right)  ,\text{ }n\geq2\text{.}%
\end{array}
\right.
\]

Take $R\geq1$. We can find $k\in\mathbb{N}$ such that $k\leq R<k+1$ and thus%
\begin{align*}
\frac{1}{\left\vert \left[  -R,R\right]  ^{2}\right\vert }%
{\displaystyle\int\nolimits_{\left[  -R,R\right]  ^{2}}}
\left\vert f\left(  x\right)  \right\vert ^{p}dx  &  \leq\frac{1}{4k^{2}}%
{\displaystyle\int\nolimits_{\left[  -\left(  k+1\right)  ,k+1\right]  ^{2}}}
\left\vert f\left(  x\right)  \right\vert ^{p}dx\\
&  =\frac{1}{4k^{2}}\left[  1\,.\,2^{2}+2\,.\,2^{3}+3\,.\,2^{3}+...+\left(
k+1\right)  .\,2^{3}\right] \\
&  \leq\frac{2}{k^{2}}\left[  1+2+...+\left(  k+1\right)  \right] \\
&  =\frac{\left(  k+1\right)  \left(  k+2\right)  }{k^{2}}\leq6
\end{align*}
which shows that $f\in B^{p}\left(  \mathbb{R}^{2}\right)  $. However, if now
we consider rectangles of the form $\left[  -1,1\right]  \times\left[
-L,L\right]  $ for $L\geq2$, we can pick $m\in\mathbb{N}$ such that $m\leq
L<m+1$ and therefore%
\begin{align*}
\frac{1}{\left\vert \left[  -1,1\right]  \times\left[  -L,L\right]
\right\vert }%
{\displaystyle\int\nolimits_{\left[  -1,1\right]  \times\left[  -L,L\right]
}}
\left\vert f\left(  x\right)  \right\vert ^{p}dx  &  =\frac{1}{4L}%
{\displaystyle\int\nolimits_{\left[  -1,1\right]  \times\left[  -L,L\right]
}}
\left\vert f\left(  x\right)  \right\vert ^{p}dx\\
&  \geq\frac{1}{4\left(  m+1\right)  }%
{\displaystyle\int\nolimits_{\left[  -1,1\right]  \times\left[  -m,m\right]
}}
\left\vert f\left(  x\right)  \right\vert ^{p}dx\\
&  =\frac{1}{4\left(  m+1\right)  }\left[  1\,.\,2^{2}+2\,.\,2^{2}%
+...+m\,.\,2^{2}\right] \\
&  =m/2\rightarrow\infty\text{ if }m\rightarrow\infty\text{,}%
\end{align*}
that is, $f\notin\mathcal{B}^{p}\left(  \mathbb{R}^{2}\right)  $.
\end{proof}

Using the idea of the previous example we can get a characterization of the
space $\mathcal{B}^{p}\left(  \mathbb{R}^{n}\right)  $. To this end, consider
the following subsets of $\mathbb{R}^{n}$:%
\[
C_{j_{1},j_{2},...,j_{n}}=C_{j_{1}}\times C_{j_{2}}\times...\times C_{j_{n}}%
\]
where%
\[
C_{0}=\left[  -1,1\right]  \text{ and }C_{j}=\left\{  x\in\mathbb{R}%
:2^{j-1}<\left\vert x\right\vert \leq2^{j}\right\}
\]
for $j\in\mathbb{N}$.

For $1\leq p<\infty$ and $f\in L_{loc}^{p}\left(  \mathbb{R}^{n}\right)  $
define%
\[
\left\Vert f\right\Vert _{\mathcal{B}^{p}}^{\ast}:=\sup_{\substack{j_{i}%
\geq0\\i=1,2,...,n}}2^{-\frac{\left(  j_{1}+j_{2}+...+j_{n}\right)  }{p}%
}\left\Vert f\chi_{C_{j_{1},j_{2},...,j_{n}}}\right\Vert _{p}\text{.}%
\]
Now, we can state the following characterization.

\begin{proposition}
\label{characterization}$f\in\mathcal{B}^{p}\left(  \mathbb{R}^{n}\right)  $
if and only if $\left\Vert f\right\Vert _{\mathcal{B}^{p}}^{\ast}<\infty$.
Moreover, $\left\Vert f\right\Vert _{\mathcal{B}^{p}}$ and $\left\Vert
f\right\Vert _{\mathcal{B}^{p}}^{\ast}$ are equivalent norms.
\end{proposition}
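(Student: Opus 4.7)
The plan is to establish the two inequalities $\|f\|_{\mathcal{B}^p}^\ast \leq C\|f\|_{\mathcal{B}^p}$ and $\|f\|_{\mathcal{B}^p}\leq C\|f\|_{\mathcal{B}^p}^\ast$ separately; both reduce to comparing the rectangles $[-R_1,R_1]\times\cdots\times[-R_n,R_n]$ with the one-dimensional dyadic annuli $C_j$ coordinate by coordinate.

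For the first direction, fix indices $j_1,\ldots,j_n\geq 0$. By the very definition of $C_j$ we have the inclusion
\[
C_{j_1,\ldots,j_n}\subset [-2^{j_1},2^{j_1}]\times\cdots\times [-2^{j_n},2^{j_n}],
\]
so I would plug the legitimate choice $R_i=2^{j_i}\geq 1$ into the definition of $\|f\|_{\mathcal{B}^p}$. This immediately gives
\[
\int_{C_{j_1,\ldots,j_n}}|f|^p\,dx\leq 2^{j_1+\cdots+j_n}\,\|f\|_{\mathcal{B}^p}^p,
\]
after which raising to the $1/p$ and moving the dyadic factor over produces $\|f\|_{\mathcal{B}^p}^\ast\leq \|f\|_{\mathcal{B}^p}$ with constant $1$.

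For the reverse direction, I would take arbitrary $R_1,\ldots,R_n\geq 1$ and for each coordinate pick the unique integer $k_i\geq 0$ with $2^{k_i-1}<R_i\leq 2^{k_i}$ (with $k_i=0$ when $R_i=1$). Then $[-R_i,R_i]\subset \bigcup_{j_i=0}^{k_i}C_{j_i}$, and a product decomposition yields
\[
\int_{[-R_1,R_1]\times\cdots\times[-R_n,R_n]}|f|^p\,dx\leq\sum_{j_1=0}^{k_1}\cdots\sum_{j_n=0}^{k_n}\int_{C_{j_1,\ldots,j_n}}|f|^p\,dx.
\]
Bounding each inner integral by $2^{j_1+\cdots+j_n}(\|f\|_{\mathcal{B}^p}^\ast)^p$ and summing the geometric series factor by factor gives a total of $\prod_i(2^{k_i+1}-1)\leq 2^n\prod_i 2^{k_i}$. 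Combining this with $2^{k_i}\leq 2R_i$, coming from the choice of $k_i$, delivers
\[
\int_{[-R_1,R_1]\times\cdots\times[-R_n,R_n]}|f|^p\,dx\leq 4^n\,R_1\cdots R_n\,(\|f\|_{\mathcal{B}^p}^\ast)^p,
\]
so $\|f\|_{\mathcal{B}^p}\leq 4^{n/p}\|f\|_{\mathcal{B}^p}^\ast$, which also implies the if-and-only-if statement for finiteness.

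No serious obstacle is anticipated; the only point requiring a little care is the correct handling of the boundary case $R_i=1$ (forcing $k_i=0$ and $C_0=[-1,1]$), so that the inequality $2^{k_i}\leq 2R_i$ remains valid uniformly and the dyadic covering of $[-R_i,R_i]$ by $C_0\cup\cdots\cup C_{k_i}$ is genuine. Once the indexing is set up correctly, the two bounds are essentially geometric series estimates and the equivalence of norms follows.
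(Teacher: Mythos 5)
Your proposal is correct and follows essentially the same route as the paper: one direction by testing the $\mathcal{B}^p$ norm on the rectangles $\prod_i[-2^{j_i},2^{j_i}]\supset C_{j_1,\ldots,j_n}$, the other by covering $\prod_i[-R_i,R_i]$ with the dyadic blocks $C_{j_1,\ldots,j_n}$, $0\le j_i\le k_i$, and summing the geometric series. The only difference is that you track the explicit constants ($1$ and $4^{n/p}$) where the paper is content with a generic $C$.
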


\begin{proof}
Assume that $\left\Vert f\right\Vert _{\mathcal{B}^{p}}^{\ast}<\infty$. For
$i=1,...,n$ let $R_{i}>1$ and choose $j_{i}\in\mathbb{N}$ such that
\[
2^{j_{i}-1}<R_{i}\leq2^{j_{i}}\text{.}%
\]
We have that
\begin{align*}%
{\displaystyle\int\nolimits_{{\prod\nolimits_{i=1}^{n}}\left[  -R_{i}%
,R_{i}\right]  }}
\left\vert f\left(  x\right)  \right\vert ^{p}dx  &  \leq%
{\displaystyle\sum\limits_{k_{1}=0}^{j_{1}}}
\,%
{\displaystyle\sum\limits_{k_{2}=0}^{j_{2}}}
...%
{\displaystyle\sum\limits_{k_{n}=0}^{j_{n}}}
\int\nolimits_{C_{k_{1},k_{2},...,k_{n}}}\left\vert f\left(  x\right)
\right\vert ^{p}dx\\
&  \leq%
{\displaystyle\sum\limits_{k_{1}=0}^{j_{1}}}
\,%
{\displaystyle\sum\limits_{k_{2}=0}^{j_{2}}}
...%
{\displaystyle\sum\limits_{k_{n}=0}^{j_{n}}}
2^{k_{1}+k_{2}+...+k_{n}}\left(  \left\Vert f\right\Vert _{\mathcal{B}^{p}%
}^{\ast}\right)  ^{p}\\
&  \leq C2^{j_{1}+j_{2}+...+j_{n}}\left(  \left\Vert f\right\Vert
_{\mathcal{B}^{p}}^{\ast}\right)  ^{p}\\
&  \leq CR_{1}R_{2}...R_{n}\left(  \left\Vert f\right\Vert _{\mathcal{B}^{p}%
}^{\ast}\right)  ^{p}\text{.}%
\end{align*}
Hence $f\in\mathcal{B}^{p}\left(  \mathbb{R}^{n}\right)  $ and $\left\Vert
f\right\Vert _{\mathcal{B}^{p}}\leq C\left\Vert f\right\Vert _{\mathcal{B}%
^{p}}^{\ast}$.

Conversely, if $f\in\mathcal{B}^{p}\left(  \mathbb{R}^{n}\right)  $,
$i=1,...,n$ and $j_{i}\geq0$%
\begin{align*}
\left\Vert f\chi_{C_{j_{1},j_{2},...,j_{n}}}\right\Vert _{p}^{p}  &  =%
{\displaystyle\int\nolimits_{{\prod\nolimits_{i=1}^{n}}\left[  -2^{j_{i}%
},2^{j_{i}}\right]  }}
\left\vert f\left(  x\right)  \right\vert ^{p}dx\\
&  \leq C\left\Vert f\right\Vert _{\mathcal{B}^{p}}^{p}2^{j_{1}+j_{2}%
+...+j_{n}}%
\end{align*}
which implies that
\[
\left\Vert f\right\Vert _{\mathcal{B}^{p}}^{\ast}=\sup_{\substack{j_{i}%
\geq0\\i=1,2,...,n}}2^{-\frac{\left(  j_{1}+j_{2}+...+j_{n}\right)  }{p}%
}\left\Vert f\chi_{C_{j_{1},j_{2},...,j_{n}}}\right\Vert _{p}\leq C\left\Vert
f\right\Vert _{\mathcal{B}^{p}}\text{.}%
\]
This concludes the proof.
\end{proof}

\section{Continuity of average operators}

Average integral operators were considered by Hardy, Littlewood and P\'{o}lya
in \cite{Hardy}. They proved the following classical inequality:%
\[%
{\displaystyle\int\nolimits_{0}^{\infty}}
\left(  \frac{F\left(  x\right)  }{x}\right)  ^{p}dx\leq\left(  \frac{p}%
{p-1}\right)  ^{p}%
{\displaystyle\int\nolimits_{0}^{1}}
f^{p}\left(  x\right)  dx,
\]
where $1<p<\infty$, $F\left(  x\right)  =\int_{0}^{x}f\left(  t\right)  dt,$
$f\geq0$ and the constant $\left(  \frac{p}{p-1}\right)  ^{p}$ is the best possible.

Closely related to this operator is the operator $H_{\varphi}$ introduced by
Carton-Lebrun and Fosset in \cite{carton} and by Xiao in \cite{Xiao} which is
pointwisely defined as follows:
\begin{equation}
H_{\varphi}f\left(  x\right)  :=%
{\displaystyle\int\nolimits_{0}^{1}}
f\left(  tx\right)  \varphi\left(  t\right)  dt. \label{hardy average}%
\end{equation}

Xiao in \cite{Xiao} proved continuity of $H_{\varphi}$ under appropriate
conditions on $\varphi$ on $L^{p}\left(  \mathbb{R}^{n}\right)  $ and
$BMO\left(  \mathbb{R}^{n}\right)  $ for $1\leq p\leq\infty$. It is our goal
to prove continuity of this and other related operators in our rectangular
Herz spaces.

We will start by considering the following discrete version of
(\ref{hardy average}).

Let $\left\{  r_{k}\right\}  _{k=1}^{\infty}$ be a sequence in $\left(
0,1\right]  $ which is strictly decreasing and $\lim_{k\rightarrow\infty}%
r_{k}=0$. If $f:\mathbb{R}^{n}\longrightarrow\mathbb{R}$ is a Lebesgue
measurable function and $\varphi:\left\{  r_{k}:k\in\mathbb{N}\right\}
\longrightarrow\left(  0,\infty\right)  $ is any function, consider the
operator $H_{\varphi}^{d}$ formally defined as%
\[
H_{\varphi}^{d}f\left(  x\right)  =%
{\displaystyle\sum\limits_{k=1}^{\infty}}
\varphi\left(  r_{k}\right)  f\left(  r_{k}x\right)  \text{.}%
\]
Now, notice that a necessary and sufficient condition for the existence of
$H_{\varphi}^{d}$ as a bounded operator on $L^{p}\left(  \mathbb{R}%
^{n}\right)  $ is that%
\begin{equation}%
{\displaystyle\sum\limits_{k=1}^{\infty}}
r_{k}^{-n/p}\varphi\left(  r_{k}\right)  <\infty. \label{serie}%
\end{equation}

Indeed, assuming the convergence of the series in (\ref{serie}), given $f\in
L^{p}\left(  \mathbb{R}^{n}\right)  $, $1\leq p<\infty$, and using Minkowski
inequality we obtain%
\begin{align*}
\left\Vert H_{\varphi}^{d}f\right\Vert _{p}  &  \leq%
{\displaystyle\sum\limits_{k=1}^{\infty}}
\varphi\left(  r_{k}\right)  \left(  \int\nolimits_{\mathbb{R}^{n}}\left\vert
f\left(  r_{k}x\right)  \right\vert ^{p}dx\right)  ^{1/p}\\
&  =\left\Vert f\right\Vert _{p}%
{\displaystyle\sum\limits_{k=1}^{\infty}}
r_{k}^{-n/p}\varphi\left(  r_{k}\right)  \text{,}%
\end{align*}
which implies that $\left\Vert H_{\varphi}^{d}\right\Vert _{L^{p}\rightarrow
L^{p}}\leq%
{\displaystyle\sum\limits_{k=1}^{\infty}}
r_{k}^{-n/p}\varphi\left(  r_{k}\right)  $.

Conversely, if $H_{\varphi}^{d}$ is bounded on $L^{p}\left(  \mathbb{R}%
^{n}\right)  $, we can consider as Xiao in \cite{Xiao} the function
\[
f_{\varepsilon}\left(  x\right)  =\left\vert x\right\vert ^{-\frac{n}%
{p}-\varepsilon}\chi_{\left\{  \left\vert x\right\vert >1\right\}  }\text{,}%
\]
where $0<\varepsilon<1$. It turns out that $\left\Vert f_{\varepsilon
}\right\Vert _{p}=\frac{C_{n}}{p\varepsilon}$, $C_{n}$ an $n$-dimensional
constant and%
\[
H_{\varphi}^{d}f_{\varepsilon}\left(  x\right)  =\left(
{\displaystyle\sum\limits_{k=1}^{\infty}}
r_{k}^{-\frac{n}{p}-\varepsilon}\varphi\left(  r_{k}\right)  \right)
\left\vert x\right\vert ^{-\frac{n}{p}-\varepsilon}\chi_{\left\{  \left\vert
x\right\vert >1\right\}  }\text{.}%
\]
Thus, same procedure as done in \cite{Xiao} shows that%
\[
\left\Vert H_{\varphi}^{d}\right\Vert _{L^{p}\rightarrow L^{p}}^{p}\left\Vert
f_{\varepsilon}\right\Vert _{p}^{p}\geq\left[  \varepsilon^{\varepsilon}%
{\displaystyle\sum\limits_{k=1}^{\infty}}
r_{k}^{-\frac{n}{p}-\varepsilon}\varphi\left(  r_{k}\right)  \right]
^{p}\left\Vert f_{\varepsilon}\right\Vert _{p}^{p}%
\]
and therefore%
\[
\left\Vert H_{\varphi}^{d}\right\Vert _{L^{p}\rightarrow L^{p}}\geq\left[
\varepsilon^{\varepsilon}%
{\displaystyle\sum\limits_{k=1}^{\infty}}
r_{k}^{-\frac{n}{p}-\varepsilon}\varphi\left(  r_{k}\right)  \right]
\geq\varepsilon^{\varepsilon}%
{\displaystyle\sum\limits_{k=1}^{\infty}}
r_{k}^{-\frac{n}{p}}\varphi\left(  r_{k}\right)
\]
for any $0<\varepsilon<1$. Now, letting $\varepsilon\rightarrow0$ we obtain%
\[
\left\Vert H_{\varphi}^{d}\right\Vert _{L^{p}\rightarrow L^{p}}\geq%
{\displaystyle\sum\limits_{k=1}^{\infty}}
r_{k}^{-\frac{n}{p}}\varphi\left(  r_{k}\right)  \text{.}%
\]

We have proved the following result.

\begin{theorem}
\label{cdiscrete}The operator $H_{\varphi}^{d}$ is a bounded operator on
$L^{p}\left(  \mathbb{R}^{n}\right)  $, $1\leq p<\infty$, if and only if $%
{\displaystyle\sum\limits_{k=1}^{\infty}}
r_{k}^{-\frac{n}{p}}\varphi\left(  r_{k}\right)  <\infty$. In such case
\[
\left\Vert H_{\varphi}^{d}\right\Vert _{L^{p}\rightarrow L^{p}}=%
{\displaystyle\sum\limits_{k=1}^{\infty}}
r_{k}^{-\frac{n}{p}}\varphi\left(  r_{k}\right)  \text{.}%
\]

\end{theorem}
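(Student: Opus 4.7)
My plan is to prove the two directions separately, with the proof of the converse producing a lower bound on the operator norm that matches the upper bound from the forward direction, thereby giving both equivalence and the norm identity in one pass.

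The forward direction is essentially carried out already in the excerpt. Given $\sum_k r_k^{-n/p}\varphi(r_k)<\infty$, I would apply Minkowski's inequality to the series defining $H_\varphi^d f$ and then perform the dilation change of variable $y=r_kx$ in each term to extract a factor $r_k^{-n/p}$, yielding $\|H_\varphi^d f\|_p \le \|f\|_p \sum_k r_k^{-n/p}\varphi(r_k)$ and hence $\|H_\varphi^d\|_{L^p\to L^p}\le \sum_k r_k^{-n/p}\varphi(r_k)$.

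For the converse, I would test against $f_\varepsilon(x)=|x|^{-n/p-\varepsilon}\chi_{\{|x|>1\}}$, $0<\varepsilon<1$, a standard device introduced by Xiao. Its $L^p$ norm is computable in polar coordinates, and since both $f_\varepsilon$ and $\varphi$ are non-negative,
\[
H_\varphi^d f_\varepsilon(x)=|x|^{-n/p-\varepsilon}\sum_k\varphi(r_k)\,r_k^{-n/p-\varepsilon}\chi_{\{r_k|x|>1\}}.
\]
Restricting the $L^p$ norm to large $|x|$ turns this into a scalar multiple of a function comparable to $f_\varepsilon$, and the ratio of norms yields
\[
\|H_\varphi^d\|_{L^p\to L^p}\ge \varepsilon^\varepsilon \sum_k r_k^{-n/p-\varepsilon}\varphi(r_k)\ge \varepsilon^\varepsilon \sum_k r_k^{-n/p}\varphi(r_k),
\]
where the last step uses $r_k\le 1$ so that $r_k^{-n/p-\varepsilon}\ge r_k^{-n/p}$. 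Sending $\varepsilon\to 0^+$ and noting $\varepsilon^\varepsilon\to 1$ produces the matching lower bound, which simultaneously forces the series to converge.

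The main delicate point I expect is justifying the factor $\varepsilon^\varepsilon$ rigorously: the indicator $\chi_{\{r_k|x|>1\}}$ depends on $k$, so the identity used in the excerpt, namely $H_\varphi^d f_\varepsilon(x)=\bigl(\sum_k r_k^{-n/p-\varepsilon}\varphi(r_k)\bigr)|x|^{-n/p-\varepsilon}\chi_{\{|x|>1\}}$, is not literal. One must restrict to a region $\{|x|>R\}$ where the relevant indicators are all active (e.g.\ $R=r_N^{-1}$ for a partial sum up to $N$), and $\varepsilon^\varepsilon$ emerges from comparing $\|f_\varepsilon\chi_{\{|x|>R\}}\|_p$ to $\|f_\varepsilon\|_p$ and then optimising in $N$ and $\varepsilon$. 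This truncation step, paralleling Xiao's treatment in \cite{Xiao}, is where the real work lies; once it is in place, equivalence of the two conditions and the exact operator-norm equality follow automatically.
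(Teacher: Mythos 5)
Your proposal follows essentially the same route as the paper: Minkowski's inequality for the upper bound, and Xiao's test function $f_{\varepsilon}(x)=\left\vert x\right\vert ^{-n/p-\varepsilon}\chi_{\{\left\vert x\right\vert >1\}}$ with the truncation-and-$\varepsilon^{\varepsilon}$ device for the matching lower bound. In fact you are slightly more careful than the paper, which writes $H_{\varphi}^{d}f_{\varepsilon}(x)=\bigl(\sum_{k}r_{k}^{-n/p-\varepsilon}\varphi(r_{k})\bigr)\left\vert x\right\vert ^{-n/p-\varepsilon}\chi_{\{\left\vert x\right\vert >1\}}$ without accounting for the $k$-dependent supports $\{\left\vert x\right\vert >1/r_{k}\}$ and then defers to Xiao for the rest; your explicit handling of that point is exactly the missing justification.
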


We can also consider the following generalization of the operator $H_{\varphi
}^{d}$.

Let $\Phi:\left\{  r_{k_{1}}^{\left(  1\right)  }:k_{1}\in\mathbb{N}\right\}
\times...\times\left\{  r_{k_{n}}^{\left(  n\right)  }:k_{n}\in\mathbb{N}%
\right\}  \longrightarrow\left(  0,\infty\right)  $ any function, where for
every $j=1,...,n$, the sequence $\left\{  r_{k_{j}}^{\left(  j\right)
}\right\}  _{k_{j}=1}^{\infty}\subset\left(  0,1\right]  $, is strictly
decreasing and $\lim_{k_{j}\rightarrow\infty}r_{k_{j}}^{\left(  j\right)  }%
=0$. For a Lebesgue measurable function $f:\mathbb{R}^{n}\longrightarrow
\mathbb{R}$ define formally%
\begin{equation}
\mathbb{H}_{\Phi}^{d}f\left(  x\right)  =%
{\displaystyle\sum\limits_{k_{1}=1}^{\infty}}
...%
{\displaystyle\sum\limits_{k_{n}=1}^{\infty}}
\Phi\left(  r_{k_{1}}^{\left(  1\right)  },...,r_{k_{n}}^{\left(  n\right)
}\right)  f\left(  r_{k_{1}}^{\left(  1\right)  }x_{1},...,r_{k_{n}}^{\left(
n\right)  }x_{n}\right)  \text{.} \label{disc hardy average}%
\end{equation}
With the same proof as in Theorem \ref{cdiscrete} we can show:

\begin{theorem}
\label{cgdiscrete}The operator $\mathbb{H}_{\Phi}^{d}$ is a bounded operator
on $L^{p}\left(  \mathbb{R}^{n}\right)  $, $1\leq p<\infty$, if and only if
\[%
{\displaystyle\sum\limits_{k_{1}=1}^{\infty}}
...%
{\displaystyle\sum\limits_{k_{n}=1}^{\infty}}
\Phi\left(  r_{k_{1}}^{\left(  1\right)  },...,r_{k_{n}}^{\left(  n\right)
}\right)  \left(  r_{k_{1}}^{\left(  1\right)  }\right)  ^{-1/p}...\left(
r_{k_{n}}^{\left(  n\right)  }\right)  ^{-1/p}<\infty\text{.}%
\]
In such case%
\[
\left\Vert \mathbb{H}_{\Phi}^{d}\right\Vert _{L^{p}\rightarrow L^{p}}=%
{\displaystyle\sum\limits_{k_{1}=1}^{\infty}}
...%
{\displaystyle\sum\limits_{k_{n}=1}^{\infty}}
\Phi\left(  r_{k_{1}}^{\left(  1\right)  },...,r_{k_{n}}^{\left(  n\right)
}\right)  \left(  r_{k_{1}}^{\left(  1\right)  }\right)  ^{-1/p}...\left(
r_{k_{n}}^{\left(  n\right)  }\right)  ^{-1/p}\text{.}%
\]

\end{theorem}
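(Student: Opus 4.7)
The plan is to carry the proof of Theorem \ref{cdiscrete} through verbatim, exploiting the tensor-product structure of both the operator $\mathbb{H}_{\Phi}^{d}$ and the test functions involved. Neither direction requires new ideas; only the bookkeeping becomes heavier.

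For the sufficiency direction I would apply Minkowski's inequality to the $n$-fold sum defining $\mathbb{H}_{\Phi}^{d}f$, obtaining
\[
\|\mathbb{H}_{\Phi}^{d}f\|_{p}\leq \sum_{k_{1}=1}^{\infty}\cdots\sum_{k_{n}=1}^{\infty}\Phi\bigl(r_{k_{1}}^{(1)},\ldots,r_{k_{n}}^{(n)}\bigr)\left(\int_{\mathbb{R}^{n}}\bigl|f\bigl(r_{k_{1}}^{(1)}x_{1},\ldots,r_{k_{n}}^{(n)}x_{n}\bigr)\bigr|^{p}dx\right)^{1/p}.
\]
Then I would perform the coordinate-wise change of variables $y_{i}=r_{k_{i}}^{(i)}x_{i}$; the Jacobian produces $\prod_{i=1}^{n}(r_{k_{i}}^{(i)})^{-1}$, which on extracting the $p$-th root yields exactly the claimed factors $\prod_{i=1}^{n}(r_{k_{i}}^{(i)})^{-1/p}$, giving $\|\mathbb{H}_{\Phi}^{d}\|_{L^{p}\to L^{p}}$ bounded above by the stated series.

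For the necessity direction and for the matching lower bound, I would replace the one-dimensional test function $f_{\varepsilon}$ used in the proof of Theorem \ref{cdiscrete} by its $n$-fold tensor product,
\[
g_{\varepsilon}(x)=\prod_{i=1}^{n}|x_{i}|^{-\tfrac{1}{p}-\varepsilon}\chi_{\{|x_{i}|>1\}},\qquad 0<\varepsilon<1.
\]
A Fubini computation gives $\|g_{\varepsilon}\|_{p}^{p}=(C/(p\varepsilon))^{n}$ for an absolute constant $C$. Substituting $g_{\varepsilon}$ into $\mathbb{H}_{\Phi}^{d}$, each coordinate $x_{i}\mapsto r_{k_{i}}^{(i)}x_{i}$ pulls a factor $(r_{k_{i}}^{(i)})^{-\tfrac{1}{p}-\varepsilon}$ out of $|x_{i}|^{-\tfrac{1}{p}-\varepsilon}$ and replaces $\chi_{\{|x_{i}|>1\}}$ by $\chi_{\{|x_{i}|>1/r_{k_{i}}^{(i)}\}}$. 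Repeating the exact one-dimensional argument (the indicator sets only shrink, which hurts only the lower estimate in the direction we want) yields
\[
\|\mathbb{H}_{\Phi}^{d}\|_{L^{p}\to L^{p}}\geq\varepsilon^{n\varepsilon}\sum_{k_{1}=1}^{\infty}\cdots\sum_{k_{n}=1}^{\infty}\Phi\bigl(r_{k_{1}}^{(1)},\ldots,r_{k_{n}}^{(n)}\bigr)\prod_{i=1}^{n}\bigl(r_{k_{i}}^{(i)}\bigr)^{-1/p},
\]
and letting $\varepsilon\to 0^{+}$ (so that $\varepsilon^{n\varepsilon}\to 1$) closes the gap with the upper bound, establishing both the finiteness criterion and the equality of norms.

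The main obstacle, if any, is purely organizational: one must carefully justify that each truncation $\chi_{\{|x_{i}|>1/r_{k_{i}}^{(i)}\}}$ only diminishes the $L^{p}$ norm (harmless for a lower bound) and that the interchange of the $n$ summations with the $L^{p}$ norm in the upper bound is legitimate, which follows from Minkowski applied iteratively together with the assumed absolute convergence of the $n$-fold series. Beyond this, the proof is a mechanical lift of the one-variable argument and carries no new analytic content.
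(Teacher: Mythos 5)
Your proposal is correct and is exactly what the paper intends: the authors give no separate argument for Theorem \ref{cgdiscrete}, stating only that it follows ``with the same proof as in Theorem \ref{cdiscrete},'' and your tensorized Minkowski upper bound together with the product test function $g_{\varepsilon}$ and the $\varepsilon^{n\varepsilon}$ lower bound is the natural realization of that. The only caveat is cosmetic: the shrunken indicators $\chi_{\{|x_{i}|>1/r_{k_{i}}^{(i)}\}}$ are not ``harmless'' per se for a lower bound, but the restriction to $|x_{i}|\geq\varepsilon^{-1}$ and to indices with $r_{k_{i}}^{(i)}\geq\varepsilon$ (the source of your $\varepsilon^{n\varepsilon}$) handles them exactly as in Xiao's one-dimensional argument.
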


Now we will study the action of the operator $\mathbb{H}_{\Phi}^{d}$ on our
rectangular Herz spaces defined in the previous section.

For these spaces is even easier the proof of the continuity of the operator
$\mathbb{H}_{\Phi}^{d}$. We provide it for the sake of completeness.

\begin{theorem}
\label{cgdiscreteBp}The operator $\mathbb{H}_{\Phi}^{d}$ is a bounded operator
on $\mathcal{B}^{p}\left(  \mathbb{R}^{n}\right)  $, $1\leq p<\infty$, if and
only if
\begin{equation}%
{\displaystyle\sum\limits_{k_{1}=1}^{\infty}}
...%
{\displaystyle\sum\limits_{k_{n}=1}^{\infty}}
\Phi\left(  r_{k_{1}}^{\left(  1\right)  },...,r_{k_{n}}^{\left(  n\right)
}\right)  <\infty\text{.} \label{serie2}%
\end{equation}
In such case%
\[
\left\Vert \mathbb{H}_{\Phi}^{d}\right\Vert _{\mathcal{B}^{p}\rightarrow
\mathcal{B}^{p}}=%
{\displaystyle\sum\limits_{k_{1}=1}^{\infty}}
...%
{\displaystyle\sum\limits_{k_{n}=1}^{\infty}}
\Phi\left(  r_{k_{1}}^{\left(  1\right)  },...,r_{k_{n}}^{\left(  n\right)
}\right)  \text{.}%
\]

\end{theorem}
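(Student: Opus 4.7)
My plan is to follow the pattern of Theorem \ref{cdiscrete}: extract the lower bound on the operator norm by testing on a single convenient function, and obtain the matching upper bound via the triangle inequality for $\|\cdot\|_{\mathcal{B}^p}$ combined with the dilation-invariance of this norm.

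For the necessity of (\ref{serie2}) and the lower bound, I test $\mathbb{H}_\Phi^d$ on the constant function $f \equiv 1$. Then $\mathbb{H}_\Phi^d f$ is the constant function with value $\sum \Phi(r_{k_1}^{(1)},\ldots,r_{k_n}^{(n)})$, so since the $\mathcal{B}^p$-norm of a constant function is proportional to that constant, the ratio $\|\mathbb{H}_\Phi^d f\|_{\mathcal{B}^p}/\|f\|_{\mathcal{B}^p}$ is exactly $\sum \Phi(r_{k_1}^{(1)},\ldots,r_{k_n}^{(n)})$. Hence, if $\mathbb{H}_\Phi^d$ is bounded on $\mathcal{B}^p$, the series (\ref{serie2}) must converge, and
\[
\|\mathbb{H}_\Phi^d\|_{\mathcal{B}^p \to \mathcal{B}^p} \,\geq\, \sum \Phi\bigl(r_{k_1}^{(1)},\ldots,r_{k_n}^{(n)}\bigr).
\]

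For sufficiency and the matching upper bound, assuming (\ref{serie2}) holds, I would use the triangle inequality for $\|\cdot\|_{\mathcal{B}^p}$ (i.e., Minkowski's inequality for $L^p$ inside each rectangle $\prod_j [-R_j,R_j]$, followed by taking the supremum in $R_j$) to obtain
\[
\|\mathbb{H}_\Phi^d f\|_{\mathcal{B}^p} \,\leq\, \sum_{k_1,\ldots,k_n} \Phi\bigl(r_{k_1}^{(1)},\ldots,r_{k_n}^{(n)}\bigr) \, \bigl\| f\bigl(r_{k_1}^{(1)}\,\cdot\,,\,\ldots,\,r_{k_n}^{(n)}\,\cdot\,\bigr) \bigr\|_{\mathcal{B}^p}.
\]
The crux is then the dilation identity $\bigl\| f(r_1\,\cdot\,,\ldots,r_n\,\cdot\,) \bigr\|_{\mathcal{B}^p} = \|f\|_{\mathcal{B}^p}$ for all $r_j \in (0,1]$. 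This follows from the change of variables $y_j = r_j x_j$, which turns the rectangular average of $|f(r_1 x_1,\ldots,r_n x_n)|^p$ over $\prod_j[-R_j,R_j]$ into a rectangular average of $|f(y)|^p$ of the same shape but over $\prod_j[-r_jR_j,r_jR_j]$; the supremum then realigns onto the family defining $\|f\|_{\mathcal{B}^p}$.

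The main obstacle will be carefully justifying this dilation-invariance, since the change of variables pushes the scale parameter $S_j := r_j R_j$ into the range $[r_j,\infty)$ rather than $[1,\infty)$, and one must check that the supremum over this extended range does not exceed $\|f\|_{\mathcal{B}^p}$ (the lower bound is automatic, by restricting the sup to $S_j \geq 1$). Once this point is settled, combining the two inequalities yields $\|\mathbb{H}_\Phi^d\|_{\mathcal{B}^p\to\mathcal{B}^p} = \sum \Phi(r_{k_1}^{(1)},\ldots,r_{k_n}^{(n)})$ and completes the proof, exactly mirroring the structure of Theorems \ref{cdiscrete} and \ref{cgdiscrete} but now without the $(r_{k_j}^{(j)})^{-1/p}$ factors, thanks to the scale-invariance of the $\mathcal{B}^p$ average.
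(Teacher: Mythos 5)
Your strategy coincides with the paper's: Minkowski's inequality for the upper bound and the test function $f_0\equiv 1$ for the lower bound, and the lower-bound half of your argument is correct. The problem is precisely the step you yourself single out as ``the main obstacle'': the dilation identity $\left\Vert f(r_1\,\cdot\,,\ldots,r_n\,\cdot\,)\right\Vert _{\mathcal{B}^{p}}=\left\Vert f\right\Vert _{\mathcal{B}^{p}}$ for $r_j\in(0,1]$ is \emph{false} for the inhomogeneous norm (\ref{norma1}), and the difficulty you point out (the rescaled parameter $S_j=r_jR_j$ escaping below $1$) cannot be settled. Take $n=1$, fix $r\in(0,1)$ and let $f=N^{1/p}\chi_{[-1/N,1/N]}$ with $N=1/r$. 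Then $\left\Vert f\right\Vert _{\mathcal{B}^{p}}=2^{1/p}$, while $f(r\,\cdot)=r^{-1/p}\chi_{[-1,1]}$ has $\left\Vert f(r\,\cdot)\right\Vert _{\mathcal{B}^{p}}=(2/r)^{1/p}$; in general the dilation $f\mapsto f(r_1\,\cdot\,,\ldots,r_n\,\cdot\,)$ has operator norm $(r_1\cdots r_n)^{-1/p}$ on $\mathcal{B}^{p}$, not $1$. So your displayed upper bound does not follow from Minkowski alone, and your sketch does not close this gap.

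Moreover the gap is not merely technical: since all terms of $\mathbb{H}_{\Phi}^{d}f$ are nonnegative when $f\geq 0$, the same test functions give
\[
\left\Vert \mathbb{H}_{\Phi}^{d}\right\Vert _{\mathcal{B}^{p}\rightarrow\mathcal{B}^{p}}\;\geq\;\sup_{k_{1},\ldots,k_{n}}\Phi\left(  r_{k_{1}}^{\left(  1\right)  },\ldots,r_{k_{n}}^{\left(  n\right)  }\right)  \prod_{j=1}^{n}\left(  r_{k_{j}}^{\left(  j\right)  }\right)  ^{-1/p},
\]
which can be infinite while (\ref{serie2}) converges (for instance $n=1$, $\Phi(r_k)=2^{-k}$, $r_k=4^{-kp}$). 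For what it is worth, the paper's own proof asserts exactly the inequality you flag, without comment, so you have reproduced its argument faithfully; but the step genuinely fails for the space as defined with $R_j\geq 1$. The argument, and the clean formula $\left\Vert \mathbb{H}_{\Phi}^{d}\right\Vert =\sum\Phi$, are correct for the \emph{homogeneous} variant mentioned at the end of the paper, where the supremum in (\ref{norma1}) runs over all $R_j>0$ and the change of variables $y_j=r_jx_j$ really does leave the rectangular averages invariant. For the inhomogeneous space a correct sufficient condition must retain the weights $\left(  r_{k_{j}}^{\left(  j\right)  }\right)  ^{-1/p}$, as in Theorem \ref{cgdiscrete}.
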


\begin{proof}
Assuming condition (\ref{serie2}), taking $R_{j}>1$, $j=1,...,n$, and using
Minkowski inequality we can see that%
\begin{align*}
&  \left[  \frac{1}{R_{1}...R_{n}}%
{\displaystyle\int\nolimits_{\left[  -R_{1},R_{1}\right]  \times
...\times\left[  -R_{n},R_{n}\right]  }}
\left\vert \mathbb{H}_{\Phi}^{d}f\left(  x\right)  \right\vert ^{p}dx\right]
^{1/p}\\
&  \leq%
{\displaystyle\sum\limits_{k_{1}=1}^{\infty}}
...%
{\displaystyle\sum\limits_{k_{n}=1}^{\infty}}
\Phi\left(  r_{k_{1}}^{\left(  1\right)  },...,r_{k_{n}}^{\left(  n\right)
}\right)  \left[  \frac{1}{R_{1}...R_{n}}%
{\displaystyle\int\nolimits_{\left[  -R_{1},R_{1}\right]  \times
...\times\left[  -R_{n},R_{n}\right]  }}
\left\vert f\left(  r_{k_{1}}^{\left(  1\right)  }x_{1},...,r_{k_{n}}^{\left(
n\right)  }x_{n}\right)  \right\vert ^{p}dx\right]  ^{1/p}\\
&  \leq%
{\displaystyle\sum\limits_{k_{1}=1}^{\infty}}
...%
{\displaystyle\sum\limits_{k_{n}=1}^{\infty}}
\Phi\left(  r_{k_{1}}^{\left(  1\right)  },...,r_{k_{n}}^{\left(  n\right)
}\right)  \left\Vert f\right\Vert _{\mathcal{B}^{p}}\text{,}%
\end{align*}
and hence $\left\Vert \mathbb{H}_{\Phi}^{d}\right\Vert _{\mathcal{B}^{p}}\leq%
{\displaystyle\sum\limits_{k_{1}=1}^{\infty}}
...%
{\displaystyle\sum\limits_{k_{n}=1}^{\infty}}
\Phi\left(  r_{k_{1}}^{\left(  1\right)  },...,r_{k_{n}}^{\left(  n\right)
}\right)  .$

Now, if the operator $\mathbb{H}_{\Phi}^{d}$ is bounded on $\mathcal{B}%
^{p}\left(  \mathbb{R}^{n}\right)  $, it is enough to consider the function
$f_{0}\equiv1$ because in such case we easily obtain the required reverse inequality.
\end{proof}

Our next goal is to generalize the operator given by (\ref{disc hardy average}%
). Before to do this, we will define another class of rectangular spaces
closely related to $\mathcal{B}^{p}$.

\begin{definition}
\label{central-rectangular}For $1\leq p<\infty$ we define%
\[
\mathcal{CMO}^{p}\left(  \mathbb{R}^{n}\right)  =\left\{  f\in L_{loc}%
^{p}\left(  \mathbb{R}^{n}\right)  :\left\Vert f\right\Vert _{\mathcal{CMO}%
^{p}}<\infty\right\}  \text{,}%
\]
where%
\begin{equation}
\left\Vert f\right\Vert _{\mathcal{CMO}^{p}}:=\sup_{\substack{R_{j}%
\geq1\\j=1,...,n}}\left[  \frac{1}{R_{1}...R_{n}}%
{\displaystyle\int\nolimits_{\left[  -R_{1},R_{1}\right]  \times
...\times\left[  -R_{n},R_{n}\right]  }}
\left\vert f\left(  x\right)  -f_{R_{1}...R_{n}}\right\vert ^{p}dx\right]
^{1/p}\text{,} \label{cmo}%
\end{equation}
and $f_{R_{1}...R_{n}}$ is the average of $f$ on $\left[  -R_{1},R_{1}\right]
\times...\times\left[  -R_{n},R_{n}\right]  $.
\end{definition}

It is not difficult to show that $\left(  \mathcal{CMO}^{p},\left\Vert
\cdot\right\Vert _{\mathcal{CMO}^{p}}\right)  $ is a Banach space if we
identify functions that differ by a constant almost everywhere on
$\mathbb{R}^{n}$. Also, we obtain an equivalent norm to $\left\Vert
\cdot\right\Vert _{\mathcal{CMO}^{p}}$ if we consider the quantity%
\[
\left\Vert f\right\Vert _{\mathcal{CMO}^{p}}^{\ast}:=\sup_{\substack{R_{j}%
\geq1\\j=1,...,n}}\inf_{a\in\mathbb{R}}\left[  \frac{1}{R_{1}...R_{n}}%
{\displaystyle\int\nolimits_{\left[  -R_{1},R_{1}\right]  \times
...\times\left[  -R_{n},R_{n}\right]  }}
\left\vert f\left(  x\right)  -a\right\vert ^{p}dx\right]  ^{1/p}\text{.}%
\]
This space is the rectangular version of the space $CMO^{p}$ (\cite{Chen-Lau}%
,\cite{GCuerva}) whose elements satisfy the condition%
\[
\sup_{R\geq1}\left[  \frac{1}{\left\vert Q\left(  0,R\right)  \right\vert }%
{\displaystyle\int\nolimits_{Q\left(  0,R\right)  }}
\left\vert f\left(  x\right)  -f_{Q\left(  0,R\right)  }\right\vert
^{p}dx\right]  ^{1/p}<\infty\text{.}%
\]
Here, $Q\left(  0,R\right)  $ denotes the cube centered at $0$ and side length
equal to $R$. Clearly, $\mathcal{B}^{p}\subset\mathcal{CMO}^{p}\subset
CMO^{p}$.

Now, we consider the following operator:

For Lebesgue measurable functions $f:\mathbb{R}^{n}\longrightarrow\mathbb{R}$,
and $\phi:\left[  0,1\right]  ^{n}\longrightarrow\left(  0,\infty\right)  $,
we define%
\begin{equation}
\mathbb{H}_{\phi}f\left(  x\right)  :=%
{\displaystyle\int\nolimits_{\left[  0,1\right]  ^{n}}}
f\left(  t_{1}x_{1},...,t_{n}x_{n}\right)  \phi\left(  t_{1},...,t_{n}\right)
dt_{1}...dt_{n}\text{.} \label{Hardycontinuo1}%
\end{equation}

Observe that same proof as given by Xiao in \cite{Xiao}, shows that
$\mathbb{H}_{\phi}$ is a bounded operator on $L^{p}\left(  \mathbb{R}%
^{n}\right)  $, $1\leq p<\infty$, if and only if
\[%
{\displaystyle\int\nolimits_{\left[  0,1\right]  ^{n}}}
t_{1}^{-1/p}...t_{n}^{-1/p}\phi\left(  t_{1},...,t_{n}\right)  dt_{1}%
...dt_{n}<\infty\text{.}%
\]
We will give equivalent conditions for the boundedness of the operator
$\mathbb{H}_{\phi}$ on the spaces $\mathcal{B}^{p}$ and $\mathcal{CMO}^{p}$.

\begin{theorem}
\label{continuity1}The operator $\mathbb{H}_{\phi}$ is a bounded operator on
$\mathcal{B}^{p}\left(  \mathbb{R}^{n}\right)  $ and $\mathcal{CMO}^{p}\left(
\mathbb{R}^{n}\right)  $, $1\leq p<\infty$, if and only if
\[%
{\displaystyle\int\nolimits_{\left[  0,1\right]  ^{n}}}
\phi\left(  t_{1},...,t_{n}\right)  dt_{1}...dt_{n}<\infty\text{.}%
\]
Moreover%
\begin{equation}
\left\Vert \mathbb{H}_{\phi}\right\Vert _{\mathcal{B}^{p}\rightarrow
\mathcal{B}^{p}}=\left\Vert \mathbb{H}_{\phi}\right\Vert _{\mathcal{CMO}%
^{p}\rightarrow\mathcal{CMO}^{p}}=%
{\displaystyle\int\nolimits_{\left[  0,1\right]  ^{n}}}
\phi\left(  t_{1},...,t_{n}\right)  dt_{1}...dt_{n}\text{.}
\label{normasfinales}%
\end{equation}

\end{theorem}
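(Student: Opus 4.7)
My plan is to mirror the proof of Theorem~\ref{cgdiscreteBp}, with Minkowski's integral inequality in place of its discrete counterpart and a change of variables transferring the $\mathcal{B}^{p}$- (or $\mathcal{CMO}^{p}$-) norm of a dilate $f(t\cdot)$ back to the norm of $f$. Throughout, write $M := \int_{[0,1]^{n}}\phi(t)\,dt$.

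For the $\mathcal{B}^{p}$ upper bound, I fix $R_{1},\ldots,R_{n}\ge 1$ and apply Minkowski with respect to the measure $\phi(t)\,dt$ to obtain
$$\left[\frac{1}{R_{1}\cdots R_{n}}\int_{\prod[-R_{j},R_{j}]}|\mathbb{H}_{\phi}f(x)|^{p}\,dx\right]^{1/p}\le\int_{[0,1]^{n}}\phi(t)\left[\frac{1}{R_{1}\cdots R_{n}}\int_{\prod[-R_{j},R_{j}]}|f(t_{1}x_{1},\ldots,t_{n}x_{n})|^{p}\,dx\right]^{1/p}dt.$$
The substitution $u_{i}=t_{i}x_{i}$ rewrites the inner bracket as $[(t_{1}R_{1}\cdots t_{n}R_{n})^{-1}\int_{\prod[-t_{i}R_{i},t_{i}R_{i}]}|f(u)|^{p}\,du]^{1/p}$, which I would bound by $\|f\|_{\mathcal{B}^{p}}$; taking the supremum in $R_{j}$ then gives $\|\mathbb{H}_{\phi}f\|_{\mathcal{B}^{p}}\le M\,\|f\|_{\mathcal{B}^{p}}$.

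For the $\mathcal{CMO}^{p}$ upper bound, the starting observation is the identity
$$\mathbb{H}_{\phi}f(x)-(\mathbb{H}_{\phi}f)_{E}=\int_{[0,1]^{n}}\phi(t)\bigl(f(tx)-f_{tE}\bigr)\,dt,\qquad E=\prod[-R_{j},R_{j}],\;\;tE=\prod[-t_{i}R_{i},t_{i}R_{i}],$$
which follows from Fubini in the computation of $(\mathbb{H}_{\phi}f)_{E}$ together with the observation that the mean of $f(t\cdot)$ over $E$ equals the mean of $f$ over $tE$. The same Minkowski plus change-of-variables scheme applied to $f-f_{tE}$ should then produce $\|\mathbb{H}_{\phi}f\|_{\mathcal{CMO}^{p}}\le M\,\|f\|_{\mathcal{CMO}^{p}}$. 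The matching lower bounds (which also force $M<\infty$ whenever $\mathbb{H}_{\phi}$ is bounded) come from explicit test functions: $f_{0}\equiv 1$ for $\mathcal{B}^{p}$, where $\mathbb{H}_{\phi}f_{0}\equiv M$ and $\|f_{0}\|_{\mathcal{B}^{p}}=1$, and, since constants are trivial in $\mathcal{CMO}^{p}$, a componentwise-dilation-invariant function such as $f_{1}(x)=\mathrm{sgn}(x_{1})$ for $\mathcal{CMO}^{p}$, for which $f_{1}(tx)=f_{1}(x)$ on $(0,1]^{n}$ gives $\mathbb{H}_{\phi}f_{1}=Mf_{1}$ almost everywhere.

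The delicate step is verifying that $(t_{1}R_{1}\cdots t_{n}R_{n})^{-1}\int_{\prod[-t_{i}R_{i},t_{i}R_{i}]}|f|^{p}\,du\le\|f\|_{\mathcal{B}^{p}}^{p}$ (and the analogous $\mathcal{CMO}^{p}$ estimate): the supremum defining $\|f\|_{\mathcal{B}^{p}}$ only ranges over rectangles of half-side-length $\ge 1$, whereas $t_{i}R_{i}$ can drop below $1$ when $t_{i}$ is small. I expect to handle this by enlarging the domain of integration to half-sides $\max(t_{i}R_{i},1)$, tracking the correction factor, and showing it is absorbed once the supremum over $R_{j}\ge 1$ is taken on the outer integral.
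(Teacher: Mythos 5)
Your scheme is the same as the paper's: Minkowski's integral inequality against the measure $\phi(t)\,dt$, the change of variables $u_{i}=t_{i}x_{i}$, the identity $(\mathbb{H}_{\phi}f)_{R_{1}\cdots R_{n}}=\int_{[0,1]^{n}}f_{t_{1}R_{1}\cdots t_{n}R_{n}}\phi(t)\,dt$, and a test function for the converse. (Your choice of $f_{1}(x)=\mathrm{sgn}(x_{1})$ for the $\mathcal{CMO}^{p}$ lower bound is in fact sharper than the paper's $f_{0}\equiv1$, whose $\mathcal{CMO}^{p}$-norm is zero.) The genuine gap is at precisely the step you flag as delicate, and your proposed repair does not close it. Enlarging $\prod_{i}[-t_{i}R_{i},t_{i}R_{i}]$ to half-sides $\max(t_{i}R_{i},1)$ yields the correction factor $\prod_{i}\min(1,t_{i}R_{i})^{-1/p}$, and taking the supremum over $R_{j}\geq1$ works against you rather than for you: the factor is largest at $R_{j}=1$, where it equals $\prod_{i}t_{i}^{-1/p}$, so the quantity you actually need to be finite is $\int_{[0,1]^{n}}\phi(t)\prod_{i}t_{i}^{-1/p}\,dt$ --- the $L^{p}$ condition, not $\int\phi<\infty$.

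Moreover, no repair can work for the inhomogeneous spaces as defined (supremum over $R_{j}\geq1$). Take $n=1$ and $f(x)=|x|^{-1/(2p)}\chi_{\{|x|\leq1\}}$; then $f\in\mathcal{B}^{p}(\mathbb{R})\subset\mathcal{CMO}^{p}(\mathbb{R})$ because only averages over $[-R,R]$ with $R\geq1$ are tested. Choosing $\phi(t)=t^{-1+\varepsilon}$ with $0<\varepsilon\leq1/(2p)$ gives $\int_{0}^{1}\phi(t)\,dt=1/\varepsilon<\infty$, yet $\mathbb{H}_{\phi}f(x)=|x|^{-1/(2p)}\int_{0}^{1}t^{-1/(2p)}\phi(t)\,dt=+\infty$ for every $0<|x|\leq1$, so $\mathbb{H}_{\phi}f\notin L_{loc}^{p}$. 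For what it is worth, the paper's own proof makes the same silent leap: after the change of variables, the inner average of $|f(t_{1}x_{1},\dots,t_{n}x_{n})-f_{t_{1}R_{1}\cdots t_{n}R_{n}}|^{p}$ is an average over a rectangle with half-sides $t_{i}R_{i}$ that may be smaller than $1$, and bounding it by $\|f\|_{\mathcal{CMO}^{p}}^{p}$ is only legitimate for the homogeneous versions (all $R_{j}>0$) mentioned in the paper's closing remark. So you have correctly located the weak point of the argument, but neither your fix nor the paper's proof as written resolves it in the inhomogeneous setting.
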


\begin{proof}
Just for illustration we prove the equivalence for the space $\mathcal{CMO}%
^{p}\left(  \mathbb{R}^{n}\right)  $.

Suppose that the integral in (\ref{normasfinales}) is finite. Then, for
$R_{j}>1$, $j=1,...,n$ and $f\in\mathcal{CMO}^{p}\left(  \mathbb{R}%
^{n}\right)  $ we can easily see that%
\[
\left(  \mathbb{H}_{\phi}f\right)  _{R_{1}...R_{n}}=%
{\displaystyle\int\nolimits_{\left[  0,1\right]  ^{n}}}
f_{t_{1}R_{1}...t_{n}R_{n}}\phi\left(  t_{1},...,t_{n}\right)  dt_{1}%
...dt_{n}\text{.}%
\]
Now, by Minkowski inequality and an appropriate change of variable we have
that%
\begin{align*}
&  \left[  \frac{1}{R_{1}...R_{n}}%
{\displaystyle\int\nolimits_{\left[  -R_{1},R_{1}\right]  \times
...\times\left[  -R_{n},R_{n}\right]  }}
\left\vert \mathbb{H}_{\phi}f\left(  x\right)  -\left(  \mathbb{H}_{\phi
}f\right)  _{R_{1}...R_{n}}\right\vert ^{p}dx\right]  ^{1/p}\\
&  \leq%
{\displaystyle\int\nolimits_{\left[  0,1\right]  ^{n}}}
\left(  \frac{1}{R_{1}...R_{n}}%
{\displaystyle\int\nolimits_{\left[  -R_{1},R_{1}\right]  \times
...\times\left[  -R_{n},R_{n}\right]  }}
\left\vert f\left(  t_{1}x_{1},...,t_{n}x_{n}\right)  -f_{t_{1}R_{1}%
...t_{n}R_{n}}\right\vert ^{p}dx\right)  ^{1/p}\\
&  \times\phi\left(  t_{1},...,t_{n}\right)  dt_{1}...dt_{n}\\
&  \leq\left\Vert f\right\Vert _{\mathcal{CMO}^{p}}%
{\displaystyle\int\nolimits_{\left[  0,1\right]  ^{n}}}
\phi\left(  t_{1},...,t_{n}\right)  dt_{1}...dt_{n}\text{,}%
\end{align*}
which implies that%
\[
\left\Vert \mathbb{H}_{\phi}\right\Vert _{\mathcal{CMO}^{p}\rightarrow
\mathcal{CMO}^{p}}\leq%
{\displaystyle\int\nolimits_{\left[  0,1\right]  ^{n}}}
\phi\left(  t_{1},...,t_{n}\right)  dt_{1}...dt_{n}\text{.}%
\]
For the converse, it suffices to consider the function $f_{0}\left(  x\right)
\equiv1$.
\end{proof}

Finally, it should be remarked that Theorems \ref{cgdiscreteBp} and
\ref{continuity1} remain true if we consider homogeneous versions of the
spaces $\mathcal{B}^{p}$ and $\mathcal{CMO}^{p}$, that is, those defined by
taking $R_{j}>0$ for every $j=1,...,n$ in (\ref{norma1}) and (\ref{cmo}).

\bigskip

Departamento de Matem\'{a}ticas

Universidad de Sonora

Rosales y Luis Encinas

Hermosillo, Sonora, 83000, M\'{e}xico

Email: carolina.espinoza@mat.uson.mx

\qquad\quad martha@mat.uson.mx\footnote{Corresponding author}


\begin{thebibliography}{99}                                                                                               %


\bibitem {A-GP-L}J. Alvarez, M. Guzm\'{a}n-Partida, J. Lakey, Spaces of
bounded $\lambda$-central mean oscillation, Morrey spaces, and $\lambda
$-central Carleson measures, \emph{Collect. Math. }\textbf{51}, 1 (2000), 1-47.

\bibitem {beurling}A. Beurling, Construction and analysis of some convolution
algebras, \emph{Ann. Inst. Fourier (Grenoble)} \textbf{14} (1964), 1-32.

\bibitem {carton}C. Carton-Lebrun, M. Fosset, Moyennes et quotients de Taylor
dans $BMO$, \emph{Bull. Soc. Roy. Sci. Li\`{e}ge }\textbf{53} (2) (1984), 85-87.

\bibitem {Chen-Lau}Y. Chen, K. Lau, Some new classes of Hardy spaces, \emph{J.
Funct. Anal.} \textbf{84} (1989), 255-278.

\bibitem {Feichtinger}H. Feichtinger, An elementary approach to Wiener's third
Tauberian theorem on the Euclidean $n$-space, \emph{Proceedings, Conference at
Cortona} 1984, Symposia Mathematica 29 (New York, Academic Press, 1987), 267-301.

\bibitem {Feichtinger-Weisz}H. Feichtinger, F. Weisz, Herz spaces and
summability of Fourier transforms, \emph{Math. Nachr.} \textbf{281}, 3 (2008), 309-324.

\bibitem {GCuerva}J. Garc\'{\i}a-Cuerva, Hardy spaces and Beurling algebras,
\emph{J. London Math. Soc. }\textbf{39}, 2\textbf{ }(1989), 499-513.

\bibitem {Hardy}G. Hardy, J.E. Littlewood, G. P\'{o}lya, \emph{Inequalities},
Cambridge University Press, 1999.

\bibitem {herz}C. Herz, Lipschitz spaces and Bernstein's theorem on absolutely
convergent Fourier transforms, \emph{J. Appl. Math. Mech.} \textbf{18} (1968), 283-324.

\bibitem {Xiao}J. Xiao, $L^{p}$ and $BMO$ bounds of weighted Hardy-Littlewood
averages, \emph{J. Math. Anal. Appl}. \textbf{262} (2001), 660-666.

\bibitem {wiener1}N. Wiener, Generalized Harmonic Analysis, \emph{Acta Math.}
\textbf{55} (1930), 117-258.
\end{thebibliography}
\end{document}